\DeclareMathOperator{\tr}{tr}
\newtheorem{Theorem}{Theorem}[section]
\newtheorem{proposition}{Proposition}[section]
\newtheorem{corollary}{Corollary}[section]
\let\oldproofname=\proofname
\renewcommand{\proofname}{\rm\bf{\oldproofname}}
\begin{document}

\title{Matrices of infinite dimensions and their applications}

\author{\name \L ukasz Matysiak\\
	\addr ul. Powsta\'{n}c\'{o}w Wielkopolskich 2, 85-090 Bydgoszcz, Poland\\
	Kazimierz Wielki University\\
	\email{lukmat@ukw.edu.pl} \AND 
	\name Weronika Przewo\'zniak\\
	\addr ul. Powsta\'{n}c\'{o}w Wielkopolskich 2, 85-090 Bydgoszcz, Poland\\
	Kazimierz Wielki University\\
	\email{weronika.przewozniak@student.ukw.edu.pl}
	\AND 
	\name Natalia Ruli\'nska\\
	\addr ul. Powsta\'{n}c\'{o}w Wielkopolskich 2, 85-090 Bydgoszcz, Poland\\
	Kazimierz Wielki University\\
	\email{natalia.rulinska@student.ukw.edu.pl}
}


\maketitle

\begin{abstract}
Matrices are very popular and widely used in mathematics and other fields of science. Every mathematician has known the properties of finite-sized matrices since the time of study. In this paper, we consider the basic theory of infinite matrices. So far, there have been references and few results in certain scientific fields, but they have not been thoroughly researched.
\end{abstract}

\begin{keywords}
matrix, determinant, inverse matrix, rank
\end{keywords}

\section{Introduction}

Infinite matrices, the forerunner and a main constituent of many branches of classical mathematics (infinite quadratic forms, integral equations, differential equations, etc.) and of the modern operator theory, is revisited to demonstrate its deep influence on thedevelopment of many branches of mathematics, classical and modern, replete with applications.

\medskip

It is known that we can add matrices, multiply by scalar, multiply matrices, calculate determinant, calculate inverse matrix, determine rank matrix. We can find all these properties of matrices in many basic academic books, for example in \cite{1}, \cite{2}, \cite{3}.

\medskip

Matrices with an infinite number of rows and / or columns are also considered - formally, it is sufficient that for any elements indexing rows and columns there is a well-defined matrix element (index sets do not even have to be subsets of natural numbers). Similarly to the finite case, we can define addition, subtraction, multiplication by scalar or matrix shifting, although matrix multiplication requires some assumptions.

\medskip

Applications of matrices are found in most scientific fields (\cite{5}). In every branch of physics, including classical mechanics, optics, electromagnetism, quantum mechanics, and quantum electrodynamics, they are used to study physical phenomena, such as the motion of rigid bodies.
In computer graphics, they are used to manipulate 3D models and project them onto a 2-dimensional screen. In probability theory and statistics, stochastic matrices are used to describe sets of probabilities. For example, they are used within the PageRank algorithm that ranks the pages in a Google search. (\cite{4}) Matrix calculus generalizes classical analytical notions such as derivatives and exponentials to higher dimensions. Matrices are used in economics to describe systems of economic relationships.

\medskip

A major branch of numerical analysis is devoted to the development of efficient algorithms for matrix computations, a subject that is centuries old and is today an expanding area of research. Matrix decomposition methods simplify computations, both theoretically and practically. Algorithms that are tailored to particular matrix structures, such as sparse matrices and near-diagonal matrices, expedite computations in finite element method and other computations. Infinite matrices occur in planetary theory and in atomic theory. A simple example of an infinite matrix is the matrix representing the derivative operator, which acts on the Taylor series of a function. 

\medskip

In this paper, we formalize and develop the basic theory of infinite matrices. So far, they have not been thoroughly researched, despite their significant use in some fields of science.

\section{Results}

By an infinite dimension matrix we call a matrix for which the number of rows is infinite or the number of columns is infinite.

\medskip

We define zero, triangular, diagonal, unitary and transposed matrices of an infinite dimension very analogously. 

\medskip

A square matrix of an infinite dimension is a matrix in which the number of rows is equinumerous to the number of columns. 

\medskip

Matrix sum and by scalar multiplication are also analogous.

\medskip

\begin{corollary} 
	If we try to multiply matrix $A_{m\times n}$ with matrix $B_{n\times k}$, we get the following conclusions:
	\begin{itemize}
		\item[(a) ] If $m=\infty$, $k=\infty$, then $AB=C_{\infty\times\infty}$.
		\item[(b) ] If $n=\infty$, then $AB=C_{m\times k}=[c_{ij}]$, where $c_{ij}=\sum_{l=1}^{\infty}a_{il}b_{lj}$ ($1\leqslant i\leqslant m$, $1\leqslant j\leqslant k$) be a convergent series.
		\item[(c) ] If $A$, $B$ be square matrices of an infinite dimension, then $AB=C$ holds.
		\item[(d) ] If $A$, $B$ be matrices of an infinite dimension and the number of rows in $A$ is equinumerous to the number of columns in $B$, then we can multiply matrices $A$ and $B$ only if rows of $A$ and columns of $B$ be a convergent series.
	\end{itemize}
\end{corollary}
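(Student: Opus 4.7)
The plan is to verify each of the four cases by direct application of the standard matrix multiplication rule $c_{ij}=\sum_{l}a_{il}b_{lj}$, and to track, in each setting, (i) the index set over which $i$ and $j$ range, which determines the shape of $C$, and (ii) the index set over which $l$ ranges, which determines whether the entries are finite sums or genuine series. Since the preceding material has already extended addition, scalar multiplication and the like to the infinite-dimensional setting "very analogously," the only substantive point to isolate is the convergence requirement that appears precisely when the inner index $l$ ranges over an infinite set.

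First I would handle (a): here the inner dimension $n$ is (tacitly) finite, so each $c_{ij}=\sum_{l=1}^{n}a_{il}b_{lj}$ is an ordinary finite sum and poses no problem, while $i$ still ranges over an infinite row-set and $j$ over an infinite column-set, giving $C$ of size $\infty\times\infty$. Next, for (b), when $n=\infty$ the summation $\sum_{l=1}^{\infty}a_{il}b_{lj}$ is a bona fide infinite series, and its convergence is precisely the stated hypothesis ensuring that every entry $c_{ij}$ is well-defined; the external dimensions $m,k$ are finite, so $C$ has the stated shape $m\times k$. Case (c) I would then treat as the specialization of (b) to $m=n=k=\infty$: each entry is again an infinite series $\sum_{l=1}^{\infty}a_{il}b_{lj}$, convergent under the same assumption, and the resulting $C$ is square and infinite. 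Finally, for (d), I would observe that a row of $A$ and a column of $B$ are now both infinite sequences, so the product $c_{ij}$ coincides with the series $\sum_{l}a_{il}b_{lj}$ indexed by the common infinite dimension, and the only condition for $AB$ to exist entrywise is convergence of each such series.

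The main obstacle, such as it is, is not computational but conceptual: the statement is essentially a taxonomy of what the definition $c_{ij}=\sum_{l}a_{il}b_{lj}$ produces once some of $m$, $n$, $k$ are infinite, so the work lies in pinpointing in each case which index is responsible for an infinite summation and hence for a convergence requirement, and in matching the remaining free indices with the declared shape of $C$. Once this bookkeeping is made explicit, each of (a)--(d) follows from the definition with no further argument.
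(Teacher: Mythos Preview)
The paper offers no proof of this corollary; it is stated immediately after the informal description of how addition, scalar multiplication, and the like carry over ``very analogously,'' and is evidently meant to be read as a direct unpacking of the definition $c_{ij}=\sum_l a_{il}b_{lj}$ once some of $m,n,k$ are allowed to be infinite. Your proposal does precisely that unpacking --- tracking in each case which indices govern the shape of $C$ and which govern whether the inner sum is finite or a genuine series --- so it matches the paper's (implicit) reasoning and is in fact more explicit than anything the paper provides.

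One small caution: in (a) you read $n$ as tacitly finite, and in (d) you quietly normalize the compatibility condition to the usual ``columns of $A$ equinumerous with rows of $B$'' and reinterpret ``rows of $A$ and columns of $B$ be a convergent series'' as convergence of each dot product. These are reasonable readings, but they are your interpolations rather than what the corollary literally says; since the paper never argues the point, there is nothing to contradict you, but it would be fair to flag these choices rather than present them as automatic.
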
 

Let $M_1(\infty, R)=M_1(R)$ be denote the set of all square matrices of an infinite dimension with coefficients from any integral domain $R$, where all rows and columns are convergent series. Then $M_1(\infty, R)$ be a ring. 
Easy to check that $\{A\in M_1(\infty,\mathbb{Z})\colon \det A\in\{-1,1\}\}$ and $\{A\in M_1(\infty,\mathbb{Z})\colon \det A=1\}$ are multiplicative groups.

\medskip

The determinant of a square matrix $A$ of finite dimension can be easily determined by the formula:
$$\det A=\det(\exp(\log A))=\exp(\tr(\log A)),$$
where $\log A=\sum_{k=1}^{\infty}(-1)^{k+1}\dfrac{A^k}{k}$.
For an infinite dimension we must add the assumption that $\tr(\log A)$ be a convergent series.

\begin{proposition}
	Let $A$ be an $m\times n$ matrix, and let $B$ be an matrix $n\times m$, where $m, n\in\mathbb{N}\cup\{\infty\}$. Let $1\leqslant j_1, j_2, \dots, j_m\leqslant n$.
	Let $A_{j_1j_2\dots j_m}$ denote the $m\times m$ matrix consisting of columns $j_1, j_2, \dots, j_m$ of $A$. Let $B_{j_1j_2\dots j_m}$ denote the $m\times m$ matrix consisting of rows $j_1, j_2, \dots, j_m$ of $B$. 
	Then
	$$\det (AB)=\sum_{1\leqslant j_1<j_2<\dots <j_m\leqslant n}\det(A_{j_1j_2\dots j_m})\det(B_{j_1j_2\dots j_m}).$$
	
\end{proposition}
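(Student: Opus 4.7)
The plan is to imitate the classical proof of the Cauchy--Binet formula by expanding $\det(AB)$ via multilinearity in its rows and grouping the resulting terms by their underlying index set, and then to argue that the identity survives the passage to infinite dimensions under the convergence hypotheses attached to $M_1(R)$.

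First, I would write the $i$-th row of $AB$ as $\sum_{k=1}^{n} a_{ik}\,B^{(k)}$, where $B^{(k)}$ denotes the $k$-th row of $B$. Multilinearity of the determinant in its $m$ rows then yields
$$\det(AB)=\sum_{k_1,\dots,k_m=1}^{n} a_{1k_1}\cdots a_{mk_m}\,\det\bigl[B^{(k_1)},\dots,B^{(k_m)}\bigr],$$
where the argument on the right is the $m\times m$ matrix whose $i$-th row is $B^{(k_i)}$. For finite $n$ this is a finite identity; for $n=\infty$ it arises as a termwise limit of truncated expansions, justified by the convergence hypothesis imposed on the entries of $A$ and $B$.

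Second, I would invoke the alternating property of the determinant to kill every term in which two of the $k_i$ coincide. Injective tuples $(k_1,\dots,k_m)$ biject with pairs $(J,\sigma)$, where $J=\{j_1<\cdots<j_m\}$ is their common image and $\sigma\in S_m$ is defined by $k_i=j_{\sigma(i)}$. Reshuffling the rows of $B$ into their natural order introduces the sign $\operatorname{sgn}(\sigma)$, so summing first over $\sigma$ and afterwards over $J$ gives
$$\det(AB)=\sum_{J}\Bigl(\sum_{\sigma\in S_m}\operatorname{sgn}(\sigma)\,a_{1,j_{\sigma(1)}}\cdots a_{m,j_{\sigma(m)}}\Bigr)\det(B_{j_1\dots j_m})=\sum_{J}\det(A_{j_1\dots j_m})\,\det(B_{j_1\dots j_m}),$$
because the inner sum is precisely the Leibniz expansion of $\det(A_{j_1\dots j_m})$.

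The main obstacle is the legitimacy of the rearrangement in the second step when $n=\infty$: regrouping the $m$-fold series $\sum_{k_1,\dots,k_m}$ by the unordered image $J$ requires unconditional (absolute) convergence, which I would extract from the convergence of the rows and columns of the matrices in $M_1(R)$ by an iterated application of Cauchy's double-series theorem. If in addition $m=\infty$, each $\det(A_{j_1\dots j_m})$ must be read through the $\exp\tr\log$ formula, and one must further check that $\tr\log(A_{j_1\dots j_m})$ converges for every $J$ contributing a nonzero term before the outer sum over $J$ can be assembled; this is where the argument is most delicate and where the hypotheses built into the definition of $M_1(R)$ are really used.
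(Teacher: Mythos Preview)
Your proposal is correct and follows essentially the same route as the paper: both expand $\det(AB)$ via Leibniz/multilinearity into a sum over all $m$-tuples $(k_1,\dots,k_m)$, use the alternating property together with the sign of the sorting permutation to regroup by the underlying increasing set $J=\{j_1<\cdots<j_m\}$, and recognize the inner sum over $\sigma$ as $\det(A_{j_1\dots j_m})$. If anything, your handling of the passage to $n=\infty$ (absolute convergence, rearrangement) is more scrupulous than the paper's, which simply declares that one may replace finite tuples by infinite ones.
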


\begin{proof}
	First we will show the proof in the finite version.
	
	\medskip
	
	Let $(k_1, k_2, \dots, k_m)$ be an ordered $m$-tuple of integers.
	Let $\eta (k_1, k_2, \dots, k_m)$ denote the sign of $(k_1, k_2, \dots, k_m)$.
	Let $(l_1, l_2, \dots, l_m)$ be the same as $(k_1, k_2, \dots, k_m)$ except for $k_i$ and $k_j$ having been transposed.
	Then from Transposistion is of Odd Parity:
	$$\eta(l_1, l_2, \dots, l_m)=-\eta(k_1, k_2, \dots, k_m).$$
	Let $(j_1, j_2, \dots, j_m)$ be the same as $(k_1, k_2, \dots, k_m)$ by arranged into non-decreasing order.
	That is $j_1\leqslant j_2\leqslant\dots\leqslant j_m$.
	Then it follows that:
	$$\det(B_{k_1\dots k_m})=\eta(k_1, k_2, \dots, k_m)\det(B_{j_1\dots j_m}).$$
	Hence:
	\begin{align*} 
	\det(AB)=&\sum_{1\leqslant l_1, \dots, l_m\leqslant m}\eta(l_1, \dots, l_m)(\sum_{k=1}^{n}a_{1k}b_{kl_1})\dots (\sum_{k=1}^{n}a_{mk}b_{kl_m})=\\
	=&\sum_{1\leqslant k_1, \dots, k_m\leqslant n}a_{1k_1}\dots a_{mk_m}\sum_{1\leqslant l_1, \dots, l_m\leqslant m}\eta(l_1, \dots, l_m)b_{k_1l_1}\dots b_{k_ml_m}=\\
	=&\sum_{1\leqslant k_1, \dots, k_m\leqslant n}a_{1k_1}\dots a_{mk_m}\det(B_{k_1\dots k_m})=\\
	=&\sum_{1\leqslant k_1, \dots, k_m\leqslant n}a_{1k_1}\eta(k_1, \dots, k_m)\dots a_{mk_m}\det(B_{j_1\dots j_m})=\\
	=&\sum_{1\leqslant j_1\leqslant j_2\leqslant\dots\leqslant j_m\leqslant n}\det(A_{j_1\dots j_m})\det(B_{j_1\dots j_m}).
	\end{align*} 
	If two $j$s are equal:
	$$\det(A_{j_1\dots j_m})=0.$$
	
	\medskip
	
	For an infinite matrices we put $\infty$-tuple in proof in the form $(k_1, k_2, k_3, \dots)$.
	And put $1\leqslant j_1, j_2, j_3, \dots <n=\infty$.
\end{proof}

\begin{corollary}
	If $m=n$ ($m, n\in\mathbb{N}\cup\{\infty\}$), then 
	$$\det(AB)=\det(A)\det(B).$$
\end{corollary}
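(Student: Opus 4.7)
The plan is to derive the corollary as an immediate specialization of the preceding Proposition (the Cauchy--Binet-type formula). Since the Proposition already provides
$$\det(AB) = \sum_{1 \leqslant j_1 < j_2 < \dots < j_m \leqslant n} \det(A_{j_1 j_2 \dots j_m}) \det(B_{j_1 j_2 \dots j_m}),$$
the entire content of the corollary lies in recognizing that when $m=n$ the index set over which we sum collapses to a single element.

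First I would handle the finite case $m = n \in \mathbb{N}$. The condition $1 \leqslant j_1 < j_2 < \dots < j_m \leqslant m$ forces $j_i = i$ for every $i$, because there is exactly one strictly increasing $m$-tuple drawn from $\{1,\dots,m\}$. Consequently $A_{j_1 \dots j_m} = A_{12\dots m} = A$ and likewise $B_{j_1 \dots j_m} = B$, so the sum in the Proposition reduces to the single term $\det(A)\det(B)$.

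Next I would dispatch the infinite case $m = n = \infty$ by the same counting argument: the only strictly increasing $\infty$-tuple with entries in $\{1,2,3,\dots\}$ is $(1,2,3,\dots)$ itself, so $A_{j_1 j_2 \dots} = A$ and $B_{j_1 j_2 \dots} = B$, and again the sum collapses to $\det(A)\det(B)$. Here one is implicitly using that $A,B$ (and $AB$) belong to the framework of the Proposition, i.e., all series involved converge so that the determinants are well defined; this is guaranteed in the settings of the excerpt, e.g.\ for matrices in $M_1(R)$.

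The only place that could be viewed as an obstacle is ensuring that the single surviving term on the right-hand side is genuinely $\det(A)\det(B)$ rather than a submatrix determinant of smaller support in the infinite case; but the observation that the unique increasing enumeration of $\mathbb{N}$ exhausts all row/column indices removes this concern. Hence the corollary follows.
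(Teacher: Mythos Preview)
Your approach---specializing the preceding Cauchy--Binet-type Proposition---is exactly what the paper intends, and your treatment of the finite case is correct: when $m=n\in\mathbb{N}$ the unique strictly increasing $m$-tuple in $\{1,\dots,m\}$ is $(1,2,\dots,m)$, so the sum collapses to the single term $\det(A)\det(B)$.

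The infinite case, however, contains a genuine error. You assert that ``the only strictly increasing $\infty$-tuple with entries in $\{1,2,3,\dots\}$ is $(1,2,3,\dots)$ itself,'' but this is false: any infinite subset of $\mathbb{N}$, listed in increasing order, furnishes such a tuple---for instance $(2,3,4,\dots)$ or $(1,3,5,7,\dots)$, and in fact there are uncountably many. Consequently the sum in the Proposition does \emph{not} collapse to a single term when $m=n=\infty$ by the counting argument you give, and the corollary is not established in that case. The paper itself supplies no separate proof of the corollary and is informal about the infinite situation throughout, so one might say the difficulty is already latent in the Proposition; nevertheless, the specific uniqueness claim on which your infinite-case argument rests is incorrect and would have to be replaced by some other justification.
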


The following two Propositions give us a way to compute the inverse matrix.

\begin{proposition}
	Let $A$ be a matrix in which every rows and colums form convergent series such that $||I-A||<1$, where $||\cdot||$ is a submultiplicative norm. Then
	$$A^{-1}=I+(I-B)+(I-B)^2+\dots$$
\end{proposition}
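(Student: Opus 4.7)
The plan is to recognize this as the familiar Neumann series: setting $B := I - A$, the claim is that the partial sums $S_N := \sum_{k=0}^{N} B^{k}$ converge to $A^{-1}$ in the given submultiplicative norm. (I will treat the $B$ in the displayed formula as a typo for $A$ in one place and as $I-A$ in the exponent.) The hypothesis $\|B\| = \|I-A\| < 1$ is exactly what drives the proof in the classical finite-dimensional case, and I would essentially transport that argument to the infinite setting while being careful about the convergence of the relevant series.

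First I would establish that $(S_N)$ is a Cauchy sequence. Using submultiplicativity, $\|B^{k}\| \leq \|B\|^{k}$, so for $M > N$
\begin{equation*}
\|S_M - S_N\| \;\leq\; \sum_{k=N+1}^{M} \|B\|^{k} \;\leq\; \frac{\|B\|^{N+1}}{1-\|B\|}.
\end{equation*}
Since $\|B\|<1$, the right-hand side tends to $0$, so $(S_N)$ is Cauchy and hence, in a suitable completeness sense for the matrix space under consideration, has a limit $S$. The assumption that rows and columns of $A$ form convergent series, together with the submultiplicative norm, is what allows us to realize this limit as an honest entry-wise matrix $S$ inside the ring $M_1(\infty,R)$ considered earlier.

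Next I would verify that $S$ is a two-sided inverse of $A$. The key identity is the telescoping
\begin{equation*}
(I-B)\,S_N \;=\; S_N\,(I-B) \;=\; I - B^{N+1}.
\end{equation*}
Because $\|B^{N+1}\| \leq \|B\|^{N+1} \to 0$, passing to the limit on both sides gives $A\cdot S = S \cdot A = I$. Continuity of multiplication with respect to the submultiplicative norm — namely, if $X_N \to X$ then $Y X_N \to Y X$ and $X_N Y \to X Y$, which follows from $\|Y(X_N-X)\|\leq\|Y\|\,\|X_N-X\|$ — is what legitimizes this passage to the limit.

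The step I expect to be the main obstacle is not the algebra but the analytic bookkeeping: in the infinite-dimensional setting one has to make sure that ``convergence in norm'' actually gives a matrix whose rows and columns are themselves convergent series, so that the limit object $S$ belongs to the class of matrices for which multiplication was defined in the preceding corollary. Once that is in place, submultiplicativity does all the remaining work and uniqueness of $A^{-1}$ follows from the two-sided identity $AS = SA = I$.
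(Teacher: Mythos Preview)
Your argument is the standard Neumann--series proof and is essentially correct: with $B=I-A$ and $\|B\|<1$, submultiplicativity gives $\|B^{k}\|\le\|B\|^{k}$, so the partial sums are Cauchy, and the telescoping identity $(I-B)S_N=S_N(I-B)=I-B^{N+1}$ yields $AS=SA=I$ in the limit. The only genuine assumption you are tacitly using is completeness of the matrix space in the given norm, which you correctly flag.

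The paper's own proof takes a very different---and much terser---route: it records only the observation that $A\in M_1(K)$ is invertible if and only if the associated linear map $f\colon K^{n}\to K^{n}$, $f(x)=Ax$, is invertible. That single sentence is the entire proof in the paper. Compared with your approach, this framing is more conceptual (invertibility of the matrix is reduced to invertibility of an operator), but as written it neither invokes the hypothesis $\|I-A\|<1$ nor establishes that the displayed series converges or that its sum equals $A^{-1}$. Your argument therefore supplies exactly the analytic content---Cauchy estimate, telescoping, passage to the limit---that the paper leaves implicit; conversely, the paper's formulation hints at why one might expect an inverse to exist at all, but your Neumann-series computation is what actually identifies it.
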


\begin{proof}
	A matrix $A\in M_1(K)$ (where $n\in\mathbb{N}\cup\{\infty\}$, $K$ be a field) is invertible if and only if the map $f\colon K^n\to K^n$ defined by $f(x)=Ax$ is invertible, where elements of $K^n$ are considered as column vectors.
\end{proof}

\section{Applications}

Let $A$ be an $m\times n$ matrix over an arbitrary field $F$ ($m, n\in\mathbb{N}\cup\{\infty\}$). There is an associated linear mapping $f\colon F^n\to F^m$ defined by $f(x)=Ax$. The rank of $A$ is the dimension of the image $f$. This definition has the advantage that it can be applied to any linear map without need for a specific matrix. 

\medskip

Let
$$\left\{\begin{array}{ccccccc}
a_{11}x_1+a_{12}x_2+a_{13}x_3+\ldots &= b_1\\
a_{21}x_1+a_{22}x_2+a_{23}x_3+\ldots &= b_2\\
a_{31}x_1+a_{32}x_2+a_{33}x_3+\ldots &= b_3\\
\dots 
\end{array}\right.$$ ($AX=B$) be a system of equations.
By Cramer's system we mean a system in which the number of equations is equinumerous to the number of unknowns. Then Cramer's theorem states that in finite case the system has a unique solution provided we have $n$ equations. Hence individual values for the unknowns are given by:
$$x_i=\dfrac{\det A_i}{\det A},$$ 
for $i=1, 2, \dots$, where $A_i$ is the matrix formed by replacing the $i$-th column of $A$ by the column vector $B$. If $\tr A_i$, $\tr A$ are convergent series, then Cramer's formula holds for infinity case.

\medskip

In other hand, system of equations $AX=B$ ($A$, $X$, $B$ can have an infinite dimension) implies $X=A^{-1}B$.

\begin{Theorem}[Rouch\'e-Capelli Theorem (Kronecker-Capelli Theorem)]
	Let $m, n\in\mathbb{N}\cup\{\infty\}$.
	A system of $m$ linear equations in $n$ variables $Ax=b$ is compatible if and only if both the incomplete and complete matrices ($A$ and $[A|b]$ respectively) are characterised by the same $rank A=rank [A|b]$.  
\end{Theorem}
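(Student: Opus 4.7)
The plan is to translate compatibility of $Ax=b$ into the geometric condition $b\in\mathrm{Im}(f_A)$, using the linear map $f_A\colon F^n\to F^m$, $f_A(x)=Ax$, introduced at the beginning of Section~3. Writing $U:=\mathrm{Im}(f_A)$ for the column space of $A$ and $V:=U+\mathrm{span}\{b\}$ for the column space of $[A|b]$, the definition of rank adopted in Section~3 gives $\mathrm{rank}\,A=\dim U$ and $\mathrm{rank}\,[A|b]=\dim V$, so the theorem reduces to the equivalence
$$ b\in U \;\Longleftrightarrow\; \dim U=\dim V. $$

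For the forward direction I would argue directly. If $Ax=b$ admits a solution $x=(x_1,x_2,\dots)$, then expanding $Ax$ as the (finite, or convergent in the sense of Corollary~1) combination $\sum_j x_j A^{(j)}$ of the columns of $A$ exhibits $b$ as an element of $U$. Hence $V=U$ and the two ranks coincide. The only care needed is to interpret the infinite column sum inside the convergence regime already fixed for $M_1(R)$.

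The converse begins with the finite-rank subcase, which is standard: a subspace of a finite-dimensional space with the same dimension must be the whole space, so $U\subseteq V$ with $\dim U=\dim V<\infty$ forces $U=V$, and in particular $b\in U$, from which a solution is obtained by reading off the coordinates of $b$ in a basis of $U$. The same argument carries over when $m$ or $n$ equals $\infty$ but the common rank is finite, since only finitely many of the columns of $A$ are needed to span $U$.

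The hard part will be the genuinely infinite-rank case. Equality of Hamel dimensions of $U\subseteq V$ as cardinals does \emph{not} in general force $U=V$: countably-infinite-dimensional proper subspaces of countably-infinite-dimensional spaces exist in abundance. To surmount this I would pass to the quotient $V/U$, which is spanned by the single class $\bar b$ and so has dimension $0$ or $1$; the rank equality must therefore be read as the vanishing of this at-most-one-dimensional codimension, i.e.\ $\dim(V/U)=0$, rather than as a raw cardinal comparison of $\dim U$ and $\dim V$. Under this reading $b\in U$ follows at once, and a solution $x$ is again extracted from a basis expansion of $b$, subject to the convergence constraints of $M_1(R)$. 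Pinning down this refinement of the rank notion for infinite matrices, and verifying that it is the one the statement intends, is the step I expect to cost the most effort.
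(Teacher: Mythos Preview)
Your route---interpret $Ax=b$ via the linear map $f_A\colon F^n\to F^m$, set $U=\mathrm{Im}(f_A)$ equal to the column span of $A$, and reduce compatibility to $b\in U$---is exactly the paper's. The paper's own proof stops essentially where your forward direction ends: it observes that $b\in\mathrm{Im}(f)$ means $b$ lies in the span of the columns of $A$, and then simply \emph{asserts} that this is equivalent to $\mathrm{rank}\,A=\mathrm{rank}\,[A|b]$, with no separate treatment of the infinite-rank case and no mention of the quotient $V/U$.

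So the subtlety you flag---that for infinite Hamel dimension the equality $\dim U=\dim V$ as cardinals does not force $U=V$---is real, and the paper's proof does not address it. Your proposed repair (read rank equality as $\dim(V/U)=0$, which is automatic here since $V/U$ is spanned by a single class) is a genuine refinement beyond what the paper supplies; the paper leaves the statement with its literal rank definition from the start of Section~3 and does not resolve the gap you identify.
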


\begin{proof}
	Let $m$, $n\in\mathbb{N}\cup\{\infty\}$. 
	The system of linear equations $Ax=b$ can be interpreted as a linear mapping $f:F^n\to F^m$, by $f(x)=Ax$, such that $A\in M(n, R)$, $R$ be an integral domain.
	
	\medskip
	
	This system is determined if one solution exists, t.e. if there exists $x_0$ such that $f(x_0)=b$. This means that the system is determined if $b\in Im(f)$.
	
	\medskip
	
	The basis spanning the image vector space $(Im(f), +, \cdot)$ is composed of the column vectors of the matrix $A$:
	$$B_{Im(f)}=\{I_1, I_2, \dots, I_n\}, A=(I_1I_2\dots I_n).$$
	Thus, the fact that $b\in Im(f)$ is equivalent to the fact that $b$ belongs to the span of the column vectors of the matrix $A$:
	$$b=(I_1, I_2, I_3, \dots).$$
	This is equivalent to say that the rank of
	$$A=(I_1I_2I_3\dots)$$
	and 
	$$[A|b]=(I_1I_2I_3\dots b)$$
	have the same rank.
	Thus, the system is compatible if $rank A=rank [A|B]$. 
\end{proof}

\medskip

Let $B=\{v_1, v_2, v_3, \dots\}$, $B'=\{u_1, u_2, u_3, \dots\}$. Then for $i=1, 2, 3, \dots$ we compute coordinates $\alpha_1^{(i)}$, $\alpha_2^{(i)}$, $\alpha_3^{(i)}$, $\dots$ of the basis vector $B'$ in basis $B$:
$$u_i=\sum_{j=1}^{\infty}\alpha_j^{(i)}v_j.$$
Hence a transition matrix is of the form:
$$\left( \begin{array}{cccc}
\alpha_1^{(1)} & \alpha_1^{(2)} & \alpha_1^{(3)} & \ldots \\
\alpha_2^{(1)} & \alpha_2^{(2)} & \alpha_2^{(3)} & \ldots \\
\alpha_3^{(1)} & \alpha_3^{(2)} & \alpha_3^{(3)} & \ldots \\
\vdots & \vdots & \vdots & \ddots 
\end{array} \right).$$

\medskip

Let $L\colon U\to V$ be a linear transformation, where $U$, $V$ be a linear spaces such that $\dim U=m$, $\dim V=n$ ($m, n$ can be $\infty$) and a basis of $U$ be $\{u_1, u_2, \dots, u_m\}$, a basis of $V$ be $\{v_1, v_2, \dots, v_n\}$. For $i=1, 2, \dots, m$ and $j=1, 2, \dots, n$ compute 
$$L(u_i)=\sum_{j=1}^n\alpha_j^{(i)}v_j.$$
Then a transformation matrix of transformation $L$ is of the form:
$$\left( \begin{array}{cccc}
\alpha_1^{(1)} & \alpha_1^{(2)} & \ldots & \alpha_1^{(n)} \\
\alpha_2^{(1)} & \alpha_2^{(2)} & \ldots & \alpha_2^{(n)} \\
\vdots & \vdots & \ddots & \vdots \\
\alpha_m^{(1)} & \alpha_m^{(2)} & \ldots & \alpha_m^{(n)} 
\end{array} \right).$$

\medskip

We will try to find the eigenvalues and eigenvectors of the infinity matrix.

\medskip

Solve a characteristic equation:
$$\det (A-\lambda I)=0.$$
So we have to calculate
$$\det(A-\lambda I)=\exp(\tr(\log(A-\lambda I))),$$
where $\tr\log(A-\lambda I)$ be a convergent series.

\medskip

For an appropriate eigenvalue $\lambda$, we find the corresponding eigenvector $v=(x_1, x_2, x_3, \dots)$ from the system of equations:

$$(A-\lambda I)
\left( \begin{array}{ccc}
x_1 \\
x_2 \\
x_3\\
\vdots\\ 
\end{array} \right)
=\left( \begin{array}{ccc}
0 \\
0 \\
0\\
\vdots\\ 
\end{array} \right)$$

\medskip

Let $A$ be a matrix of any dimension, whose rows are given linearly independent vectors. We are building a block matrix $[AA^T\mid A]$. Applying elementary row operations we bring it to the block matrix of the form
$[G\mid A']$, where $G$ be the upper triangular matrix. The rows of $A'$ form orthogonal vectors.

.

\noindent Accepted: .....

\end{document}